\setlist{itemsep=0pt, topsep=0pt}
\newcommand{\floor}[1]{\lfloor#1\rfloor}
\newcommand{\ceiling}[1]{\lceil#1\rceil}
\newcommand{\tbf}[1]{\textbf{#1}}
\newtheorem{theorem}{Theorem}[section]
\newtheorem{claim}[theorem]{Claim}
\newtheorem{example}[theorem]{Example}
\newtheorem{problem}[theorem]{Problem}
\newenvironment{proofclaim}[1][Proof of claim]{\begin{proof}[#1]}{\end{proof}}
\newcommand{\mc}{\mathrm{mc}}
\providecommand{\keywords}[1]
{
  \small	
  \textbf{Keywords--} #1
}
\title{Large monochromatic components in hypergraphs with large minimum codegree}
\author{Deepak Bal$^{1}$  \and Louis DeBiasio$^{2}$}
\date{\today}
\begin{document}

\maketitle
\noindent\footnotetext[1]{Department of Mathematics, Montclair State University, Montclair, NJ {\tt deepak.bal@montclair.edu}. }
\noindent\footnotetext[2]{Department of Mathematics, Miami University, Oxford, OH. \texttt{debiasld@miamioh.edu}. Research supported in part by NSF grant DMS-1954170.
}

\begin{abstract}
A result of Gy\'arf\'as \cite{Gy} says that for every $3$-coloring of the edges of the complete graph $K_n$, there is a monochromatic component of order at least $\frac{n}{2}$, and this is best possible when $4$ divides $n$.  Furthermore, for all $k\geq 3$ and every $(k+1)$-coloring of the edges of the complete $k$-uniform hypergraph $K_n^{k}$, there is a monochromatic component of order at least $\frac{kn}{k+1}$ and this is best possible for all $n$.  

Recently, Guggiari and Scott \cite{GSc} and independently Rahimi \cite{R} proved a strengthening of the graph case in the result above which says that the same conclusion holds if $K_n$ is replaced by any graph on $n$ vertices with minimum degree at least $\frac{5n}{6}-1$; furthermore, this bound on the minimum degree is best possible.

We prove a strengthening of the $k\geq 3$ case in the result above which says that the same conclusion holds if $K_n^k$ is replaced by any $k$-uniform hypergraph on $n$ vertices with minimum $(k-1)$-degree at least $\frac{kn}{k+1}-(k-1)$; furthermore, this bound on the $(k-1)$-degree is best possible.  
\end{abstract}

\keywords{
Ramsey, hypergraphs, monochromatic components
}

\section{Introduction}
Given a hypergraph $G$, the \emph{$t$-shadow} of $G$ is the $t$-uniform hypergraph on $V(G)$ with edge set $\{f\in \binom{V(G)}{t}: \exists e\in E(G), f\subseteq e\}$.  We say that a hypergraph $G$ is connected if the 2-shadow of $G$ is connected.  Given a hypergraph $G$ and a positive integer $r$, let $\mc_r(G)$ be the largest integer $t$ such that every $r$-coloring of the edges of $G$ contains a monochromatic component of order at least $t$. Let $K_n^k$ denote the complete $k$-uniform hypergraph on $n$ vertices (and $K_n = K_n^2$ as usual).

The starting point of this paper is the problem of determining the value of $\mc_r(K_n^k)$ for all $n,r,k\geq 2$.  While this problem has been studied since the 70's, it is still open in most cases.  For example, the exact value (even asymptotically) of $\mc_r(K_n^k)$ is not known when $k=2$, $r=7$; $k=3$, $r=8$; or $k=4$, $r=6$ to name a few small cases.  

As for positive results, Gy\'arf\'as proved the following.

\begin{theorem}[Gy\'arf\'as \cite{Gy}]~\label{thm:Gy}
\begin{enumerate}
\item For all $n\geq r\geq 2$, $\mc_r(K_n)\geq \frac{n}{r-1}$.  This is best possible when $(r-1)^2$ divides $n$ and there exists an affine plane of order $r-1$.  
\item For all $n\geq k\geq 2$, $\mc_k(K_n^k)=n$.
\item For all $n\geq k\geq 3$, $\mc_{k+1}(K_n^{k})\geq \frac{kn}{k+1}$.  This is best possible\footnote{Simply partition a set of $n$ vertices as equally as possible into $k+1$ sets $A_1, \dots, A_{k+1}$.  Now consider a complete $k$-uniform hypergraph $K$ with vertex set $A_1\cup \dots \cup A_{k+1}$ where for every edge $e\in E(K)$ we assign it a color $i\in [k+1]$ such that $e\cap A_{i}=\emptyset$ (which must exist since $|e|=k$).  Note that for all $i\in [k+1]$, every component of color $i$ has order at most $n-|A_i|\leq n-\floor{\frac{n}{k+1}}=\ceiling{\frac{kn}{k+1}}$.} for all $n$.
\end{enumerate}
\end{theorem}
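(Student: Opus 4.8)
The three parts share a common reformulation. Given a $k$-coloring of a hypergraph $H$, let $\mathcal{P}_i$ be the partition of $V(H)$ into the vertex sets of the color-$i$ components; then every edge lies inside a single block of the partition belonging to its own color, so $\mathcal{P}_1,\dots,\mathcal{P}_k$ jointly "cover'' every edge, and "a monochromatic component of order $\ge t$'' simply means "a block of size $\ge t$''. Part (ii) admits a clean self-contained proof that I would give in full; parts (i) and (iii) I would attack by the same device — peel off a color whose subhypergraph fails to span, and recurse on the simpler structure left behind.

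\textbf{Part (ii).} The heart is a transversal lemma: if $(S_1,T_1),\dots,(S_r,T_r)$ are bipartitions of a set $V$ with $\card{V}=n\ge r\ge 2$ and all blocks nonempty, then some $r$-subset of $V$ meets both $S_i$ and $T_i$ for every $i$. I would prove this by induction on $n$. For $n=r$ the set $V$ itself is such a transversal, since every block is a proper nonempty subset. For $n>r$, delete a vertex $x$, and say $x\in S_i$ for each $i$. If $S_i\setminus\{x\}\neq\emptyset$ for all $i$, apply the inductive hypothesis to $V\setminus\{x\}$ with the bipartitions $(S_i\setminus\{x\},T_i)$. Otherwise $S_i=\{x\}$ for $i$ in some nonempty set $D$; each such bipartition forces $x$ into the transversal and is then met automatically (the transversal has $\ge 2$ vertices), so it remains to add $r-1$ further vertices of $V\setminus\{x\}$ hitting $T_i$ for the at most $r-1$ indices outside $D$ — pick one vertex of each such $T_i$ and pad arbitrarily, which is possible since $\card{V\setminus\{x\}}=n-1\ge r$. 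With the lemma in hand, part (ii) follows: if an $r$-coloring of $K_n^r$ had all color classes disconnected, choose in each color $i$ a component $S_i$ and put $T_i=V\setminus S_i$; then every color-$i$ edge lies inside $S_i$ or inside $T_i$, so the transversal $r$-set cannot be assigned any color, a contradiction. Hence some color spans, giving $\mc_r(K_n^r)=n$ (the upper bound being trivial).

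\textbf{Part (i).} I would split on $n$. For $n\le(r-1)^2$ a single vertex suffices: some color appears on at least $\ceiling{(n-1)/r}$ of the $n-1$ edges at $v$, and the corresponding monochromatic star lies in a monochromatic component of order at least $\ceiling{(n-1)/r}+1$, which one checks is $\ge n/(r-1)$ in this range of $n$. For $n>(r-1)^2$ I would peel a color: if every color spans we are done, so color $r$ separates $V$ into $A\sqcup B$ with no color-$r$ edge across, whence every $A$–$B$ edge uses one of the colors $1,\dots,r-1$. This reduces part (i) to a bipartite analogue — every $(r-1)$-coloring of $K_{s,t}$ has a monochromatic component of order $\ge(s+t)/(r-1)$ — which I would prove by induction on the number of colors using the same peeling move. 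The delicate point, and what I expect to be the main obstacle here, is that a separating color in $K_{X,Y}$ splits $X\cup Y$ into two blocks whose crossing edges form merely two complete bipartite graphs, so a careless recursion loses a factor of two; one must carry along the edges internal to the two blocks to keep the constant at $1/(r-1)$. Sharpness, when it holds, comes from the affine plane of order $r-1$.

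\textbf{Part (iii).} The matching upper bound is exactly the coloring described in the footnote to Theorem~\ref{thm:Gy}. For the lower bound I would again peel a color: in an $(r+1)$-coloring of $K_n^r$ with no spanning color, color $r+1$ separates $V$ into $A\sqcup B$, so every $r$-set meeting both $A$ and $B$ uses one of the $r$ colors $1,\dots,r$. Fixing a vertex $a\in A$ and looking at the $r$-sets $\{a\}\cup f$ with $f\subseteq B$ turns this into an $r$-coloring of the complete $(r-1)$-uniform hypergraph on $B$, to which part (iii) for a smaller value of $r$ applies (bottoming out at the graph statement $\mc_3(K_n)\ge n/2$ of part (i)), producing a monochromatic component through $a$ of order roughly $(r-1)\card{B}/r$; the symmetric construction from a vertex of $B$ gives one of order roughly $(r-1)\card{A}/r$. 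However, even merging these reaches only about $(r-1)n/r$, short of the target $rn/(r+1)$, so the real proof must be more global — and I expect that to be the crux: the extremal monochromatic components have order $rn/(r+1)$, well more than half of $V$, so they overlap heavily, and naive double counting of $r$-sets against the partitions $\mathcal{P}_i$ (or any purely local argument) is too lossy, recovering only a constant like $(r+1)^{-1/r}$ in place of $r/(r+1)$; the sharp constant has to be extracted from the component structure itself.
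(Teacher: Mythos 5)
Your part (ii) is correct and complete: the transversal lemma (every family of $r$ bipartitions with nonempty parts of an $n$-set, $n\ge r$, is crossed by some $r$-set), its induction on $n$, and the deduction that an $r$-coloring of $K_n^r$ with no spanning monochromatic component would leave that transversal edge uncolorable, are all sound. The problems are in (i) and (iii), and they are gaps you yourself flag rather than close. In (i), once you peel a disconnected color (take $A$ to be one color-$r$ component and $B=V\setminus A$, so all $A$--$B$ edges use the other $r-1$ colors), the entire weight falls on the bipartite lemma that every $(r-1)$-coloring of a complete bipartite graph on parts $A,B$ has a monochromatic component of order at least $(|A|+|B|)/(r-1)$; you state this lemma but do not prove it, and you explicitly note that your intended recursion loses a factor of two. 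So as written, (i) is not proved (the star argument for $n\le (r-1)^2$ does check out, but it is not needed once the bipartite lemma is available, and it does not substitute for it).

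For (iii) -- the part this paper actually builds on -- the gap is more serious: your link-and-peel reduction provably cannot reach the stated bound (as you compute, it tops out around $(r-1)n/r$, and merging the two sides does not get to $rn/(r+1)$), and you offer no replacement argument, so the lower bound is simply unproved. Note that the paper itself does not reprove Theorem \ref{thm:Gy}; it cites \cite{Gy}. But the proof of Theorem \ref{thm:mindeg}, applied to $G=K_n^r$ (which has $\delta_{r-1}(G)=n-(r-1)\ge \frac{rn}{r+1}-(r-1)$), yields exactly part (iii), and it supplies the ``global'' idea you say is missing: fix one vertex $x$, let $C_i$ be the color-$i$ component containing $x$ and $F_i=V\setminus V(C_i)$, and assume every $|F_i|>\frac{n}{r+1}$. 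Completeness (or the codegree bound) then shows that any $(r-1)$-set formed by $x$ together with representatives of distinct $F_i$'s, or by vertices lying in two $F_i$'s at once, forces edges of the last color into $C_{r+1}$, making it too large; this yields the structural restrictions of Claims \ref{clm:3way*}--\ref{clm:1way2way*} on how the $F_i$ may intersect, and from them one concludes that $F_1\cap F_{r+1},\dots,F_r\cap F_{r+1}$ all have size greater than $\frac{n}{r+1}$ and lie in a single color-$(r+1)$ component of order greater than $\frac{rn}{r+1}$. In short: the missing idea is to analyze the complements of the $r+1$ components through a single fixed vertex, not to recurse on links or peel colors.
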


We will mention a few other cases in which the value of $\mc_r(K_n^k)$ is known in the conclusion.

A natural question which has received attention lately has been to determine conditions under which a $k$-uniform hypergraph $G$ on $n$ vertices satisfies $\mc_r(G)= \mc_r(K_n^k)$.

Note that for $1\leq r\leq k$, $\mc_r(G)=n= \mc_r(K_n^k)$ if and only if the $r$-shadow of $G$ is complete.  Indeed, if every $r$-set of $G$ is contained an edge, then since $\mc_r(K_n^r)=n$, we have $\mc_r(G)=n$.  Furthermore, if some $r$-set $\{x_1, \dots, x_r\}$ is not contained in an edge, then we can color the edges of $G$ with $r$-colors such that color $i$ is never used on $x_i$ and thus $\mc_r(G)<n$.  

On the other hand, as first noted by Gy\'arf\'as and S\'ark\"ozy \cite{GSc}, when $r>k=2$ it is surprisingly possible for $\mc_r(G)= \mc_r(K_n)$ provided $G$ has large enough minimum degree.  Improving on results in \cite{GS2} and \cite{DKS}, Guggiari and Scott \cite{GSc} and independently Rahimi \cite{R} proved that if $\delta(G)\geq 5n/6-1$, then $\mc_3(G)\geq \frac{n}{2}$. Then F\"uredi and Luo \cite{FL} proved that for $r\geq 4$, if $\delta(G)\geq (1-\frac{1}{6(r-1)^3})n$, then $\mc_r(G)\geq \frac{n}{r-1}$ (and thus $\mc_r(G)= \mc_r(K_n)$ when $(r-1)^2$ divides $n$ and there exists an affine plane of order $r-1$).  It is still an open problem to determine the exact minimum degree threshold which guarantees $\mc_r(G)\geq \frac{n}{r-1}$ for $r\geq 4$, but it is known to be at least $(1-\frac{r-2}{r^2-r})n-1$ (see \cite{DK} and \cite{GSc}).  

The purpose of the paper is to consider a generalization of the results mentioned in the previous paragraph to $k$-uniform hypergraphs with $k\geq 3$.  Given a hypergraph $G$ and a set $S\subseteq V(G)$, we let $\deg(S)$ be the number of edges of $G$ which contain $S$.  Given $0\leq \ell\leq k$ and a $k$-uniform hypergraph $G$, we let $\delta_{\ell}(G)=\min_{S\in \binom{V(G)}{\ell}}\deg(S).$  When $\ell=k-1$ it is customary to refer to $\delta_{k-1}(G)$ as the minimum \emph{codegree} of $G$.     

Our main result is a strengthening of Theorem \ref{thm:Gy}(iii) for $k$-uniform hypergraphs with sufficiently large $(k-1)$-degree.

\begin{theorem}\label{thm:mindeg}
For all $n\geq k\geq 3$, if $G$ is a $k$-uniform hypergraph on $n$ vertices with 
\begin{equation}\label{codeg}
\delta_{k-1}(G)\geq \frac{kn}{k+1}-(k-1), 
\end{equation}
then $\mc_{k+1}(G)\geq \frac{kn}{k+1}$.  Furthermore, the degree condition is best possible when $n\geq k(k+1)$.  
\end{theorem}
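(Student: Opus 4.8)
The plan is to argue by contradiction. Suppose $G$ satisfies \eqref{codeg} but there is an $(r+1)$-coloring $c$ of $E(G)$ in which every monochromatic component has order at most $\ceiling{\tfrac{rn}{r+1}}-1$. For an $(r-1)$-set $S\subseteq V(G)$, let $I'(S)$ be the set of colors $j$ for which all of $S$ lies in a single color-$j$ component, which we then denote $C_j(S)$. Any edge $S\cup\{v\}\in E(G)$ has a color $j$, which necessarily lies in $I'(S)$, and then $v\in C_j(S)$; hence $N(S):=\set{v}{S\cup\{v\}\in E(G)}$ satisfies $N(S)\subseteq\bigcup_{j\in I'(S)}C_j(S)$, and since $S$ is disjoint from $N(S)$ but contained in every such $C_j(S)$,
\[
\deg_{r-1}(S)=\card{N(S)}\le \Bigl|\bigcup_{j\in I'(S)}C_j(S)\Bigr|-(r-1).
\]
If $\card{I'(S)}\le 1$ this contradicts \eqref{codeg}: when $I'(S)=\{j\}$ it gives $\deg_{r-1}(S)\le\card{C_j(S)}-(r-1)<\tfrac{rn}{r+1}-(r-1)$, and when $I'(S)=\emptyset$ it gives $\deg_{r-1}(S)=0$. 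So we may assume that every $(r-1)$-set lies in at least two monochromatic components, and, combining the displayed bound with \eqref{codeg}, that $\bigl|\bigcup_{j\in I'(S)}C_j(S)\bigr|\ge\tfrac{rn}{r+1}$ for every $(r-1)$-set $S$.

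The crux is to upgrade this into the partition structure of the extremal coloring and thereby contradict the strict bound $\card{C_j(S)}<\tfrac{rn}{r+1}$. One tool I would use is a swapping move: if $S$ is an $(r-1)$-set and $s'$ is a vertex outside every $C_j(S)$ with $j\in I'(S)$, then replacing any one vertex of $S$ by $s'$ produces a set $S'$ with $I'(S')$ disjoint from $I'(S)$ — the $r-2\ge1$ retained vertices of $S$ fix $C_j(S)$ as the only color-$j$ component that could contain $S'$, and $s'$ avoids it. Since $\card{I'(S)},\card{I'(S')}\ge2$ and $I'(S),I'(S')$ are disjoint subsets of the $r+1$ colors, this step uses $r\ge3$; iterating such moves and combining with the union bounds $\bigl|\bigcup_{j\in I'(S)}C_j(S)\bigr|\ge\tfrac{rn}{r+1}$ should force the maximum color components $M_1,\dots,M_{r+1}$ to be positioned exactly as in the extremal coloring, i.e.\ the sets $A_i:=V(G)\setminus M_i$ pairwise disjoint. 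As $\card{A_i}>\tfrac{n}{r+1}$ for each $i$, this gives $\sum_{i=1}^{r+1}\card{A_i}>n$, a contradiction.

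I expect this structural step to be the main obstacle. In the graph case $r=2$, handled in \cite{GSc,R}, every edge meets exactly two parts of any vertex partition and the peeling of colors is clean; for $r\ge3$ a hyperedge can meet anywhere from $2$ to $r$ parts, and monochromatic components of distinct colors can overlap heavily — in the extremal coloring $\card{C_i\cap C_j}=\tfrac{(r-1)n}{r+1}$ — so one must carefully control which vertices of $S$ separate which colors and how the $r+1$ component partitions interact. The count is tight, $r-1$ against $r+1$ with slack exactly $2$, which is precisely why $r\ge3$ is needed: for $r=2$ an $(r-1)$-set is a single vertex with no separating power, and the statement is in fact false there, since already $\mc_3(K_n)=\tfrac n2<\tfrac{2n}{3}$.

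Finally, for the sharpness of \eqref{codeg} I would exhibit an $r$-uniform hypergraph $G'$ on $n$ vertices with $\delta_{r-1}(G')=\tfrac{rn}{r+1}-r$ admitting an $(r+1)$-coloring all of whose monochromatic components have order at most $\ceiling{\tfrac{rn}{r+1}}-1$. Starting from a partition $V(G')=A_1\cup\dots\cup A_{r+1}$ into parts of as nearly equal size as possible and the coloring of $K_n^r$ that labels each edge by one of the parts it misses, one deletes a carefully chosen family of hyperedges so that each color class becomes just disconnected enough to bring every monochromatic component below $\tfrac{rn}{r+1}$ while the minimum $(r-1)$-degree drops exactly to $\tfrac{rn}{r+1}-r$; checking these two quantities is a routine computation.
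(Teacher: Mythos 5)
Your opening reduction is correct and is essentially the same use of the codegree hypothesis that drives the paper's proof: if all edges through an $(r-1)$-set $S$ were forced into a single monochromatic component $C$, then $\deg_{r-1}(S)\le |C|-(r-1)<\frac{rn}{r+1}-(r-1)$, contradicting \eqref{codeg}. But everything after that --- the part you yourself flag as ``the crux'' and ``the main obstacle'' --- is left at the level of ``iterating such moves \dots should force,'' and that is where the entire proof lives. Two concrete problems with the sketch as written. First, your swapping move requires a vertex $s'$ outside $\bigcup_{j\in I'(S)}C_j(S)$, but you have only a \emph{lower} bound of $\frac{rn}{r+1}$ on that union; nothing prevents it from being all of $V(G)$, so $s'$ need not exist and the move cannot be launched. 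Second, the structure you aim to force --- the complements $A_i=V(G)\setminus M_i$ of the maximum components being pairwise disjoint, giving $\sum_i|A_i|>n$ --- is not what the hypotheses produce. In the configuration that actually arises, one of the complements contains all the others, so the $r+1$ complements are very far from pairwise disjoint and your intended final contradiction is unavailable.

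What the paper does instead is anchor the argument at a single vertex $x$: let $C_i$ be the color-$i$ component containing $x$ and $F_i=V(G)\setminus V(C_i)$, and assume $|F_i|>\frac{n}{r+1}$ for all $i$ (otherwise done). Repeated applications of the codegree condition to $(r-1)$-sets built by picking one vertex from each of several $F_i$'s (each such choice forces the color of every edge through the set, hence forces a large component of the remaining color) yield three intersection lemmas: no three of the $F_i$ meet; there are no two disjoint pairs $\{h,i\},\{j,k\}$ with both $F_h\cap F_i$ and $F_j\cap F_k$ nonempty; and if some $F_h\cap F_i\neq\emptyset$ then no other $F_j$ has a vertex private to it. Pigeonhole gives some nonempty $F_i\cap F_j$, and the lemmas then force, after relabeling, that $F_1,\dots,F_r$ are pairwise disjoint subsets of $F_{r+1}$, each of size greater than $\frac{n}{r+1}$. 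A final codegree application shows all of $F_1\cup\dots\cup F_r$ lies in a single component of color $r+1$ (not the one through $x$), which therefore has order greater than $\frac{rn}{r+1}$ --- the conclusion is reached by \emph{exhibiting} the large component, not by a counting contradiction. Separately, the sharpness half of the theorem is also only gestured at in your proposal (``one deletes a carefully chosen family of hyperedges''); the paper's construction is genuinely delicate, using $r$ auxiliary pairs $X_1,\dots,X_r$ alongside the near-equal parts $A_1,\dots,A_{r+1}$ and deleting exactly the edges meeting all of $(X_i)_{i\in P},(A_j)_{j\in Q}$ for partitions $\{P,Q\}$ of $[r]$, and the two verifications are not automatic.
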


\section{Large minimum codegree implies large monochromatic components}\label{sec:main}

We begin by giving the example which shows that the degree condition in Theorem \ref{thm:mindeg} is best possible.  

\begin{example}
For all $k\geq 3$ and $n\geq k(k+1)$, there exists an $k$-uniform hypergraph $G$ on $n$ vertices with $\delta_{k-1}(G)= \ceiling{\frac{kn}{k+1}}-k$ for which $\mc_{k+1}(G)\leq \mc_{k}(G)\leq  \ceiling{\frac{kn}{k+1}}-1$.
\end{example}

\begin{proof}
Let $A_1, \dots, A_k, A_{k+1}$ be disjoint sets such that $|A_i|=\floor{\frac{n}{k+1}}+1$ for all $i\in [k]$ and $|A_{k+1}|=n-k(\floor{\frac{n}{k+1}}+1)$.  Note that since $n\geq k(k+1)$, we have $|A_{k+1}|\geq 0$.  

We create an $k$-uniform hypergraph $G$ on $V=A_1\cup \dots\cup A_{k+1}$ by adding all possible edges except those which touch all of $A_1, \dots, A_k$.  Note that for all $S\in \binom{V}{k-1}$, we either have $\deg(S)=n-(k-1)$ or $\deg(S)=n-(k-1)-(\floor{\frac{n}{k+1}}+1)=\ceiling{\frac{kn}{k+1}}-k$.  

Now for every edge $e$ we color it with some color $i\in [k]$ such that such that $e\cap A_i=\emptyset$ (by the way $G$ is defined, some such $i\in [k]$ must exist).  Thus for all $i\in [k]$, the largest monochromatic component of color $i$ has order at most $n-(\floor{\frac{n}{k+1}}+1)=\ceiling{\frac{kn}{k+1}}-1$.
\end{proof}

Now we move on to the main part of the proof.  

\begin{proof}[Proof of Theorem \ref{thm:mindeg}]
Let $G$ be an $k$-uniform hypergraph on $n$ vertices satisfying \eqref{codeg} and suppose the edges of $G$ have been colored with $k+1$ colors.  Let $x\in V(G)$ and for all $i\in [k+1]$, let $C_i$ be the component of color $i$ which contains $x$.  Note that even if $x$ is not incident with any edges of color $i$, there is still a trivial component of color $i$ (consisting only of $x$) which contains $x$.  Now for all $i\in [k+1]$, let $F_i=V(G)\setminus V(C_i)$.  If $|F_i|\leq \frac{n}{k+1}$ for some $i\in [k+1]$, then we are done, so suppose that $|F_i|>\frac{n}{k+1}$ for all $i\in [k+1]$.  Let $F=F_1\cup \dots \cup F_{k+1}$.

\begin{claim}\label{clm:3way*}
For all distinct $h,i,j\in [k+1]$, $F_h\cap F_i\cap F_j=\emptyset$.
\end{claim}

\begin{proofclaim}
Suppose for contradiction that $F_h\cap F_i\cap F_j\neq \emptyset$ for some distinct $h,i,j\in [k]$ and without loss of generality suppose $\{h,i,j\}=[3]$.  Let $x_{123}\in F_1\cap F_2\cap F_3$ and let $x_i\in F_i$ for all $4\leq i\leq k$.  Note that $x_{123}, x_4, \dots, x_{k}$ are not necessarily distinct, but regardless there is a $k-1$ set $S$ which contains all of $x,x_{123}, x_4, \dots, x_{k}$.  By the choice of $x,x_{123}, x_4, \dots, x_{k}$, we have that every edge containing $S$ must have color $k+1$ and thus $|V(C_{k+1})|\geq k-1+\delta_{k-1}(G)\geq \frac{kn}{k+1}$, contradicting the fact that $|F_{k+1}|>\frac{n}{k+1}$.
\end{proofclaim}

\begin{claim}\label{clm:2way2*}
For all distinct $g,h,i,j\in [k+1]$, $F_g\cap F_h=\emptyset$ or $F_i\cap F_j=\emptyset$.
\end{claim}

\begin{proofclaim}
Suppose for contradiction that $F_g\cap F_h\neq \emptyset$ and $F_i\cap F_j\neq \emptyset$ for some distinct $g,h,i,j\in [k+1]$ and without loss of generality suppose $\{g,h\}=\{1,2\}$ and $\{i,j\}=\{3,4\}$.  Let $x_{12}\in F_1\cap F_2$ and $x_{34}\in F_3\cap F_4$ and let $x_i\in F_i$ for all $5\leq i\leq k$.  Note that $x_{12}, x_{34}, x_5, \dots, x_{k}$ are not necessarily distinct, but regardless there is a $k-1$ set $S$ which contains all of $x,x_{12}, x_{34}, x_5, \dots, x_{k}$.  Note that when $k=3$, we immediately have a contradiction since $x_{34}$ and $x$ cannot be contained together in an edge of color $4$.  When $k\geq 4$, by the choice of $x,x_{12}, x_{34}, x_5 \dots, x_{k}$, we have that every edge containing $S$ must have color $k+1$ and thus $|V(C_{k+1})|\geq k-1+\delta_{k-1}(G)\geq \frac{kn}{k+1}$, contradicting the fact that $|F_{k+1}|>\frac{n}{k+1}$.  
\end{proofclaim}

For all $i\in [k+1]$, let $F_i^*=F_i\setminus (\bigcup_{j\in [k+1]\setminus \{i\}}F_j)$; i.e.~$F_i^*$ is the set of vertices which are in $F_i$ but not in $F_j$ for any $j\in [k+1]\setminus \{i\}$.

\begin{claim}\label{clm:1way2way*}
For all distinct $h,i,j\in [k+1]$, $F_h\cap F_i=\emptyset$ or $F_j^*=\emptyset$.
\end{claim}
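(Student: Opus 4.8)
The plan is to mimic the proofs of Claims~\ref{clm:3way*} and~\ref{clm:2way2*}: supposing the conclusion fails, I will produce an $(r-1)$-element set $S$ such that every edge of $G$ containing $S$ is forced to have color $r+1$, and then the codegree hypothesis~\eqref{codeg} gives $|V(C_{r+1})|\ge (r-1)+\delta_{r-1}(G)\ge \frac{rn}{r+1}$, contradicting $|F_{r+1}|>\frac{n}{r+1}$. So assume (WLOG) that $\{h,i\}=\{1,2\}$ and $j=3$, and that $F_1\cap F_2\ne\emptyset$ while $F_3^*\ne\emptyset$.

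First I would extract the structure forced by $F_1\cap F_2\ne\emptyset$. By Claim~\ref{clm:2way2*} applied to the $4$-sets $\{1,2,\ell,\ell'\}$, the sets $F_3,F_4,\dots,F_{r+1}$ are pairwise disjoint; and by Claim~\ref{clm:3way*} every vertex of $F_1\cap F_2$ lies in no $F_\ell$ with $\ell\ge 3$, hence lies in $V(C_\ell)$ for all $\ell\in\{3,\dots,r+1\}$. Now choose $x_{12}\in F_1\cap F_2$, a vertex $x_3\in F_3^*$ (so that $x_3\in V(C_\ell)$ for every $\ell\ne 3$, by the definition of $F_3^*$), and $x_\ell\in F_\ell$ for $4\le\ell\le r$; these $r-1$ vertices are pairwise distinct, since $F_3,\dots,F_{r+1}$ are disjoint, $x_{12}$ avoids all of $F_3,\dots,F_{r+1}$, and $x_3\in F_3^*$ avoids all of $F_4,\dots,F_r$. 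Put $S=\{x_{12},x_3\}\cup\{x_\ell : 4\le\ell\le r\}$, so $|S|=r-1$.

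The key step is to check that every edge $e\supseteq S$ has color $r+1$, by ruling out the other $r$ colors one at a time. If $e$ had color $1$ (resp.\ $2$), then since $x_3\in e$ and $x_3\in V(C_1)$ (resp.\ $V(C_2)$) the whole edge $e$ would lie in the component $C_1$ (resp.\ $C_2$), contradicting $x_{12}\in e$ and $x_{12}\in F_1$ (resp.\ $F_2$). If $e$ had color $\ell$ with $4\le\ell\le r$, then since $x_3\in e$ and $x_3\in V(C_\ell)$ we would get $e\subseteq C_\ell$, contradicting $x_\ell\in e$ and $x_\ell\in F_\ell$. Finally, if $e$ had color $3$, then since $x_{12}\in e$ and $x_{12}\in V(C_3)$ we would get $e\subseteq C_3$, contradicting $x_3\in e$ and $x_3\in F_3$. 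Hence all $\delta_{r-1}(G)$ edges through $S$ are colored $r+1$; since $x_{12}\in V(C_{r+1})$ these edges all lie in $C_{r+1}$, so $|V(C_{r+1})|\ge (r-1)+\delta_{r-1}(G)\ge\frac{rn}{r+1}$, the desired contradiction. (The degenerate case $r=3$, where $S=\{x_{12},x_3\}$ and only colors $1,2,3$ need to be excluded, goes through with the same three sub-cases.)

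I expect the only genuinely new point relative to Claims~\ref{clm:3way*} and~\ref{clm:2way2*} — and the step worth stating carefully — is the choice of ``anchor'' vertices: here one does \emph{not} use the original vertex $x$ at all. Instead $x_3\in F_3^*$ anchors the components $C_1,C_2,C_4,\dots,C_r$ (it lies in all of them, by the very meaning of $F_3^*$), while $x_{12}$ anchors $C_3$ and $C_{r+1}$ (it lies in them by Claim~\ref{clm:3way*}). Once this split is arranged the case analysis is routine, so the real content is recognizing that $F_3^*\ne\emptyset$ supplies a single vertex that simultaneously forces all of colors $1,2,4,\dots,r$.
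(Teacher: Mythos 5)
Your proof is correct and follows essentially the same route as the paper's: pick $x_{12}\in F_1\cap F_2$, $x_3\in F_3^*$, and $x_\ell\in F_\ell$ for $4\le\ell\le r$, observe that $x_{12}$ lies in $C_\ell$ for all $\ell\neq 1,2$ and $x_3$ lies in $C_\ell$ for all $\ell\neq 3$, force every edge through this $(r-1)$-set to have color $r+1$, and derive $|V(C_{r+1})|\ge r-1+\delta_{r-1}(G)\ge \frac{rn}{r+1}$, contradicting $|F_{r+1}|>\frac{n}{r+1}$. The only difference is that you spell out the pairwise distinctness of the chosen vertices and the color-by-color case analysis, which the paper leaves implicit.
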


\begin{proofclaim}
Suppose for contradiction that $F_h\cap F_i\neq \emptyset$ and $F_j^*\neq \emptyset$ for some distinct $h,i,j\in [k+1]$ and without loss of generality suppose $\{h,i\}=\{1,2\}$ and $j=3$.  Let $x_{12}\in F_1\cap F_2$ and $x_3\in F_3^*$ and let $x_i\in F_i$ for all $4\leq i\leq k$.  Note that $x_{12}, x_{3}, \dots, x_{k}$ are not necessarily distinct, but regardless there is a $k-1$ set $S$ which contains all of $x_{12}, x_{3}, \dots, x_{k}$.  By the choice of $x_{12}, x_{3}, x_4 \dots, x_{k}$, we have that $x_{12}\in C_j$ for all $j\in [k+1]\setminus\{1,2\}$ and $x_3\in C_j$ for all $j\in [k+1]\setminus\{3\}$  and thus every edge containing $S$ must have color $k+1$.  Since $x_{12}\in C_{k+1}$, this means that $|V(C_{k+1})|\geq k-1+\delta_{k-1}(G)\geq \frac{kn}{k+1}$, contradicting the fact that $|F_{k+1}|>\frac{n}{k+1}$.
\end{proofclaim}

Since $|F_i|> \frac{n}{k+1}$ for all $i\in [k+1]$, we have (by pigeonhole) that $F_i\cap F_j\neq \emptyset$ for some distinct $i,j\in [k+1]$; without loss of generality, say $i=k$ and $j=k+1$.  Furthermore, by Claim \ref{clm:3way*} we have that $F_{k}\cap F_{k+1}=(F_{k}\cap F_{k+1})\setminus (F_1\cup\dots \cup F_{k-1})$.  Thus by Claim \ref{clm:2way2*}, we have that for all distinct $i,j\in [k-1]$, $F_i\cap F_j=\emptyset$, and by Claim \ref{clm:1way2way*} we have that for all $i\in [k-1]$, $F_i^*=\emptyset$.

So for all $i\in [k-1]$, we have $F_i^*=\emptyset$, $F_i\cap F_{k}\cap F_{k+1}=\emptyset$, and for all $j\in [k-1]\setminus\{i\}$, $F_i\cap F_j=\emptyset$, thus $$|F_i\cap F_{k}|+|F_i\cap F_{k+1}|= |F_i|> \frac{n}{k+1}.$$  Let $i=k-1$ say, and suppose without loss of generality that $|F_{k-1}\cap F_{k+1}|> \frac{n}{2(k+1)}>0$.  Thus by Claim \ref{clm:2way2*} we have that for all $j\in [k-2]$, $F_j\cap F_{k}=\emptyset$ which in turn implies that for all $j\in [k-2]$, $|F_j\cap F_{k+1}|= |F_j|>\frac{n}{k+1}$.  This in turn implies (again by Claim \ref{clm:2way2*}) that $|F_{k-1}\cap F_{k+1}|= |F_{k-1}|>\frac{n}{k+1}$.  Now by Claim \ref{clm:1way2way*}, we have $F_{k}^*=\emptyset$.  Thus $|F_{k}\cap F_{k+1}|= |F_{k}|>\frac{n}{k+1}$.  So all together this implies that we have $k$ disjoint sets, $F_1, \dots, F_{k}$, each of which has more than $\frac{n}{k+1}$ vertices.  

Finally, we note that by \eqref{codeg} and the fact that $|F_i|>\frac{n}{k+1}$ for all $i\in [k]$, the subhypergraph of $G$ consisting of those edges which touch all of $F_1, \dots, F_k$ is connected.  Furthermore, by the definition of the $F_i$'s, every such edge must have color $k+1$ and thus we have a component of color $k+1$ of order at least $|F_1\cup \dots\cup F_k|>\frac{kn}{k+1}$.
\end{proof}

\section{Conclusion}

For all $1\leq \ell\leq k-1$ and $n\geq r\geq k+1\geq 4$, let $f(n,r,k,\ell)$ be the smallest integer for which it holds that if $G$ is a $k$-uniform hypergraph on $n$ vertices with minimum $\ell$-degree at least $f(n,r,k,\ell)$, then $\mc_{r}(G)=\mc_r(K_n^k)$. Thus Theorem \ref{thm:mindeg} determines $f(n,k+1,k,k-1)$ for all $k\ge 3$ and $n\geq k(k+1)$.

While we gave a best possible strengthening of Theorem \ref{thm:Gy}(iii) in terms of minimum  $(k-1)$-degree, it would be interesting to consider minimum $\ell$-degree for $1\leq \ell\leq k-2$; in particular $\ell=1$.

We note that for $r\geq k+2$, determining the (asymptotic) value of $\mc_r(K_n^k)$ itself is mostly open and thus the general problem of determining $f(n,r,k,k-1)$ is likely out of the question at the moment.  That being said, there is one general result of 
F\"uredi and Gy\'arf\'as which for all $k\geq 3$, determines the exact value of $\mc_r(K_n^k)$ for infinitely many $r$ and $n$.

\begin{theorem}[F\"uredi and Gy\'arf\'as~\cite{FG}]\label{fgthm}
Let $k,r\geq 2$ and let $q$ be the smallest integer such that $r\leq q^{k-1}+q^{k-2}+\dots+q+1$.  Then $\mc_r(K^k_n)\geq \frac{n}{q}$. 
This is sharp when $q^k$ divides $n$, $r=q^{k-1}+q^{k-2}+\dots+q+1$, and an affine space of dimension $k$ and order $q$ exists.
\end{theorem}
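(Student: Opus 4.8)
The statement has two essentially independent halves: the bound $\mc_r(K^k_n)\ge n/q$ and its sharpness. The plan is to attack the bound by translating it into a statement about covering designs. Suppose $K^k_n$ is $r$-colored and, for contradiction, every monochromatic component has fewer than $n/q$ vertices. For each color $i\in[r]$, the color-$i$ components partition the vertex set $[n]$; contracting each component to a block yields a partition $\cP_i$ of $[n]$ all of whose blocks have size $<n/q$ (so $\cP_i$ has more than $q$ blocks), and since every edge is a $k$-set lying inside a single color-$i$ component, every $k$-subset of $[n]$ lies in a block of some $\cP_i$. So it suffices to prove: \emph{if $\cP_1,\dots,\cP_r$ are partitions of $[n]$ such that every $k$-subset of $[n]$ lies in a block of some $\cP_i$, then some block has size $\ge n/q$}; equivalently, that all blocks being smaller than $n/q$ forces $r\ge 2+q+q^2+\dots+q^{k-1}$, which contradicts the bound $r\le 1+q+\dots+q^{k-1}$ built into the definition of $q$.

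I would prove this covering statement by induction on $k$. The case $k=2$ is precisely Theorem~\ref{thm:Gy}(i): an $r$-partition cover of all pairs of $[n]$ with all blocks smaller than $n/(r-1)$ is exactly an $r$-coloring of $K_n$ with all monochromatic components smaller than $n/(r-1)$, which is impossible. For the inductive step, a natural first move is to fix a largest block $B$ of $\cP_r$, set $V'=[n]\sm B$ (so $\lvert V'\rvert>n(q-1)/q$ since $\lvert B\rvert<n/q$), fix $v_0\in B$, and observe that for every $(k-1)$-subset $S$ of $V'$ the $k$-set $S\cup\{v_0\}$ cannot lie in a block of $\cP_r$ (such a block would contain $v_0$ and hence equal $B$, but $S\not\subseteq B$), so it lies in a block of some $\cP_i$ with $i\le r-1$, and that block contains $v_0$, whence $S$ lies in a block of $\cP_i\rs V'$. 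Thus $\cP_1\rs V',\dots,\cP_{r-1}\rs V'$ cover all $(k-1)$-subsets of $V'$ with all blocks of size $<n/q<\lvert V'\rvert/(q-1)$, and the inductive hypothesis yields a lower bound on $r-1$.

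The main obstacle is that this reduction is far from sharp: it lowers $q$ to $q-1$ and $k$ to $k-1$ simultaneously, and iterating it only gives $r\gtrsim q^{k-1}$, losing all the lower-order terms of $1+q+\dots+q^{k-1}$. To recover the exact constant I would instead peel colors off one at a time while keeping the ground set of essentially full size: remove color $r$, whose components split $[n]$ into more than $q$ parts, pass to the complete $k$-partite $k$-uniform hypergraph on those parts (all of whose edges carry one of $r-1$ colors), and prove a multipartite strengthening asserting that $r-1\le q(1+q+\dots+q^{k-2})$ colors on such a host still force a monochromatic component of order $\ge n/q$. The identity $1+q+\dots+q^{k-1}=1+q(1+q+\dots+q^{k-2})$ is the bookkeeping template: one color is spent on the partition into parts, and the remaining $q(1+q+\dots+q^{k-2})$ colors must resolve the $(k-1)$-uniform problem "in parallel" across the $\approx q$ parts; the delicate points are handling the $k$-sets that meet a part in $k-1$ vertices (which behave like a lower-uniformity instance) and keeping the multipartite induction self-sustaining. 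This is the technical heart of the argument.

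Sharpness is the clean part. Assume $q^k\mid n$, $r=1+q+\dots+q^{k-1}$, and that the affine space $\mathrm{AG}(k,q)$ exists, and split $[n]$ into $q^k$ equal blocks indexed by the points of $\mathrm{AG}(k,q)$. Its hyperplanes fall into exactly $1+q+\dots+q^{k-1}=r$ parallel classes, each class partitioning the points into $q$ hyperplanes of $q^{k-1}$ points each. Color an edge as follows: its $k$ vertices lie in at most $k$ blocks, hence span an affine flat of dimension at most $k-1$, hence lie in a common hyperplane $H$; give the edge the color of the parallel class of $H$. Then all edges of a given color lie inside hyperplanes of that color's parallel class, distinct hyperplanes in a parallel class are disjoint, so every monochromatic component is contained in a single hyperplane and therefore spans at most $q^{k-1}\cdot (n/q^k)=n/q$ vertices. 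Combined with the lower bound, this gives $\mc_r(K^k_n)=n/q$.
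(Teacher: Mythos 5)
The paper does not prove this theorem at all --- it is quoted from Füredi and Gyárfás \cite{FG} --- so there is no in-paper argument to compare against; your proposal has to stand on its own, and as it stands it does not. The substantive half of the statement is the lower bound $\mc_r(K^k_n)\geq \frac{n}{q}$, and your write-up never proves it. The translation to the dual formulation (partitions $\cP_1,\dots,\cP_r$ of $[n]$ whose blocks cover every $k$-subset must contain a block of size at least $\frac{n}{q}$ when $r\leq 1+q+\dots+q^{k-1}$) is correct and is exactly the setting of the cited paper, but your first induction is, as you yourself note, lossy (it degrades $q$ to $q-1$ while dropping $k$ to $k-1$ and only recovers $r\gtrsim q^{k-1}$), and the proposed repair --- a ``multipartite strengthening'' saying that $q(1+q+\dots+q^{k-2})$ colors on the hypergraph of all $k$-sets not inside a single part still force a monochromatic component of order at least $\frac{n}{q}$ --- is neither precisely formulated (the number of parts can be anywhere between $q+1$ and $n$, and the host must include the non-transversal crossing edges you mention) nor proved, nor is it evident that it is true in the form you need or that the induction on it closes. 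You explicitly defer this as ``the technical heart of the argument,'' but that heart is precisely the content of the Füredi--Gyárfás theorem; a proof that stops there has a genuine gap, and the actual argument in \cite{FG} is a different, more delicate counting/induction on the partition system rather than the multipartite peeling you sketch.

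The sharpness half is fine: taking $q^k\mid n$, blowing up the points of the affine space $\mathrm{AG}(k,q)$ into sets of size $n/q^k$, observing that any $k$ vertices meet at most $k$ blocks and hence lie in a common hyperplane, and coloring each edge by the parallel class of such a hyperplane gives $r=1+q+\dots+q^{k-1}$ colors in which every monochromatic component sits inside one hyperplane blow-up of order $q^{k-1}\cdot n/q^k=n/q$. This is the standard construction and is complete modulo routine details; only the lower bound remains unproven in your proposal.
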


As for the case $k=3$ and small values of $r$ not covered by any of the results mentioned thus far, Gy\'arf\'as and Haxell proved the following.  

\begin{theorem}[Gy\'arf\'as and Haxell~\cite{GH}]\label{ghthm}
$\mc_{5}(K^3_n)\geq 5n/7$ and $\mc_{6}(K^3_n)\geq 2n/3$.  Furthermore these are tight when $n$ is divisible by $7$ and $6$ respectively.  
\end{theorem}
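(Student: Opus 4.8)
The plan is to prove the two lower bounds by a structural analysis of the color classes, modeled on the far-set technique in the proof of Theorem \ref{thm:mindeg}, and to prove tightness by exhibiting explicit colorings. For the lower bounds, fix a base vertex $x\in V(K_n^3)$, let $t=5n/7$ (respectively $t=2n/3$), and for each color $i$ let $C_i$ be the monochromatic component of color $i$ containing $x$, with $F_i=V\setminus V(C_i)$. If some $|V(C_i)|\ge t$ we are done, so assume every $|F_i|>n-t$. Since $K_n^3$ is complete, every triple $\{x,w_1,w_2\}$ is an edge; if its color is $c$ then $w_1,w_2\in V(C_c)$, so $c$ lies outside the disconnection set $S(w):=\{i:w\notin V(C_i)\}$ of each of $w_1,w_2$. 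Writing $\overline{S(w)}=[r]\setminus S(w)$ for the colors joining $w$ to $x$, this says that $\{\overline{S(w)}:w\ne x\}$ is a \emph{pairwise intersecting} family of nonempty subsets of $[r]$, and $\sum_{w\ne x}|\overline{S(w)}|=\sum_i(|V(C_i)|-1)<r(t-1)$. If the family has a common color $c^*$ then $V(C_{c^*})=V$ and we are done; otherwise no singleton can occur, so every $\overline{S(w)}$ has size at least $2$, forcing $2(n-1)<r(t-1)$ and hence $t\gtrsim 2n/r$.

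This single-vertex bound $\mc_r(K_n^3)\gtrsim 2n/r$ is far from $5n/7$ and $2n/3$, which signals that one base vertex is too lossy; the real work is global. The plan here is to exploit the consistency of the component labels across \emph{all} vertices: the assignment sending each vertex to its tuple of components is a vertex coloring whose restriction to any triple is constrained by that triple's color, and triples not through $x$ yield additional multi-way restrictions in the spirit of Claims \ref{clm:3way*}--\ref{clm:1way2way*}. Passing to the natural fractional relaxation, I would reduce the asymptotic value of $\mc_r(K_n^3)/n$ to an optimization over the realizable ``partition systems'' on $[r]$ and then show, by a finite but delicate case analysis over $[5]$ and $[6]$, that any coloring whose components were all smaller than $5n/7$ (respectively $2n/3$) would cover the triples of $K_n^3$ more efficiently than is possible, contradicting the counting bound above. \emph{I expect this global classification to be the main obstacle}: the difficulty is precisely in controlling how the many small components of the different colors interact and in proving that the extremal colorings below are optimal up to the required accuracy.

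For tightness I would give explicit colorings with $7$ (respectively $6$) equal classes, which is exactly what the divisibility hypotheses provide. When $7\mid n$, partition $V$ into $7$ classes of size $n/7$ and split the $7$ classes into groups of sizes $3,2,2$; assign to each group its within-group pairs, giving $\binom{3}{2}+1+1=5$ pairs $M_1,\dots,M_5$, one per color. The graph $H$ on the $7$ classes with edge set $\{M_1,\dots,M_5\}$ is $K_3\cup K_2\cup K_2$, so $\alpha(H)=3$; hence every set of at least $4$ classes contains some $M_i$. Thus for any triple occupying a set $T$ of at most $3$ classes there is a color $i$ with $M_i\subseteq[7]\setminus T$, and we color the triple $i$. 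Each color $i$ then avoids the two classes of $M_i$, so its components have order at most $5n/7$, giving $\mc_5(K_n^3)\le 5n/7$. When $6\mid n$, partition $V$ into $6$ classes of size $n/6$ split into two groups of $3$; the $6$ within-group pairs form $H=K_3\cup K_3$ with $\alpha(H)=2$, so every set of at least $3$ classes contains some $M_i$, and the analogous coloring confines each color to $4$ classes, yielding components of order at most $2n/3$ and $\mc_6(K_n^3)\le 2n/3$. Combined with the lower bounds, these match Theorem \ref{fgthm} and establish the stated equalities.
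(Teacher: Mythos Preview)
The paper does not prove Theorem~\ref{ghthm}; it is quoted from \cite{GH} as background in the conclusion, with no argument given. So there is no ``paper's own proof'' to compare against, and your proposal has to be judged on its own.

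Your tightness constructions are correct and clean. For $r=5$ the graph $H=K_3\cup K_2\cup K_2$ on the seven parts has $\alpha(H)=3$, so the complement of any at-most-$3$-set of parts contains an edge $M_i$, and coloring each triple by such an $i$ forces every color class to miss two parts; similarly for $r=6$ with $H=K_3\cup K_3$. These give $\mc_5(K_n^3)\le 5n/7$ and $\mc_6(K_n^3)\le 2n/3$ exactly when $7\mid n$ and $6\mid n$.

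The lower bounds, however, are not proved. Your single-vertex argument yields only $\mc_r(K_n^3)\gtrsim 2n/r$, which you correctly note is far short of $5n/7$ and $2n/3$. The remainder of your plan---``pass to the natural fractional relaxation,'' ``reduce to an optimization over realizable partition systems,'' ``a finite but delicate case analysis''---is a description of intentions, not an argument, and you explicitly flag the global classification as ``the main obstacle.'' That obstacle \emph{is} the theorem: nothing in Claims~\ref{clm:3way*}--\ref{clm:1way2way*} or their straightforward generalizations gets you from $2n/r$ to the stated bounds, because those claims exploit a codegree hypothesis to force certain $(r-1)$-sets into a single color, and there is no analogous leverage here once you move away from the base vertex. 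As written, the proposal contains no mechanism that distinguishes $r=5,6$ from larger $r$ or that produces the specific constants $5/7$ and $2/3$.

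For context, the Gy\'arf\'as--Haxell argument in \cite{GH} does not proceed via far-sets at all; it reduces the problem to bounding the fractional transversal number of an auxiliary intersecting hypergraph built from the monochromatic components, and then invokes F\"uredi's theorem on fractional matchings in intersecting hypergraphs together with a short case analysis specific to $r=5$ and $r=6$. If you want to complete the lower bounds you will need either that machinery or a genuinely new idea; the far-set framework of Section~\ref{sec:main} is not enough.
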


So we conclude with the following problems which may be within reach.  

\begin{problem}~
\begin{enumerate}
\item Determine the value of $f(n, k+1, k, 1)$ for all $k\geq 3$.  
\item Determine the value of $f(n, r, k, k-1)$ for those $r$ and $k$ mentioned in Theorem \ref{fgthm} and Theorem \ref{ghthm}.  
\end{enumerate}
\end{problem}

\bigskip

\noindent
\tbf{Acknowledgements:}  We thank the referees for their helpful comments.

\end{document}